\def\gm{\gamma}
\def\ex{\mathrm{ex}}
\def\lf{\left\lfloor}
\def\rf{\right\rfloor}
\def\lc{\left\lceil}
\def\rc{\right\rceil}
\def\dtaT{\lf\frac{n}{2}\rf}
\def\bad{\frac13\sgmn}
\def\Fkr{F_{k,r}}
\def\geqs{\geqslant}
\def\leqs{\leqslant}
\def\sgmn{\sqrt{\gm}n}
\def\gmnn{\gm n^2}
\def\Dta{\Delta}
\def\ckq{C_{k,q}}
\newtheorem{thm}{Theorem}[section]
\newtheorem{lem}[thm]{Lemma}
\newtheorem{claim}{Claim}
\newtheorem{obs}[thm]{Observation}
\begin{document}

\title{Extremal graph for intersecting odd cycles}
\author{Xinmin Hou$^a$, \quad Yu Qiu$^b$, \quad Boyuan Liu$^c$\\
\small $^{a,b,c}$ Key Laboratory of Wu Wen-Tsun Mathematics\\
\small Chinese Academy of Sciences\\
\small School of Mathematical Sciences\\
\small University of Science and Technology of China\\
\small Hefei, Anhui 230026, China.\\
\small $^a$xmhou@ustc.edu.cn,\quad  $^b$yuqiu@mail.ustc.edu.cn\\
\small $^c$lby1055@mail.ustc.edu.cn
}

\date{}

\maketitle

\begin{abstract}
An extremal graph for a graph $H$ on $n$ vertices is a graph on $n$ vertices with maximum number of edges that does not contain $H$ as a subgraph. Let  $T_{n,r}$ be the Tur\'{a}n graph, which is the complete $r$-partite graph on $n$ vertices with part sizes that differ by at most one. The well-known Tur\'{a}n Theorem states that  $T_{n,r}$ is the only extremal graph for complete graph $K_{r+1}$. Erd\"{o}s et al. (1995) determined the extremal graphs for intersecting triangles and Chen et al. (2003) determined the maximum number of edges of the extremal graphs for intersecting cliques. In this paper, we determine the extremal graphs for intersecting odd cycles.
\end{abstract}

\section{Introduction}
In this paper, all graphs considered are simple and finite. For a graph $G$ and a vertex $x\in V(G)$, the neighborhood of $x$ in $G$ is denoted by $N_G(x)$. Let $N_G[x]=\{x\}\cup N_G(x)$.
The {\it degree} of $x$, denoted by $\deg_G(x)$, is $|N_G(x)|$.  Let  $\delta(G)$ and $\Delta(G)$ denote the minimum and maximum degrees of $G$, respectively. A {\it matching} $M$ in $G$ is a subgraph of $G$ with $\delta(M)=\Delta(M)=1$. The {\it matching number} of $G$, denoted by $\nu(G)$, is the maximum number of edges in a matching in $G$. Let $e(G)$ be the number of edges of $G$. For a graph $G$ and $S, T\subset V(G)$, let $e_G(S, T)$ be the number of edges $e=xy\in{E(G)}$ such that $x\in S$ and $y\in T$, if $S=T$, we use $e_G(S)$ instead of $e_G(S, S)$; and  we use $e_G(u, T)$ instead of $e_G(\{u\}, T)$ for convenience, the index $G$ will be omitted if no confusion from the context. For a subset $X\subseteq V(G)$ or $X\subseteq  E(G)$, let $G[S]$ be the subgraph of $G$ induced by $X$, that is $G[X]=(X, E(X))$ if $X\subseteq V(G)$, or $G[X]=(V(X),X)$ if $X\subseteq  E(G)$.  A cycle of length $q$ is called a {\it $q$-cycle}. 

Given two graphs $G$ and $H$, we say that $G$ is {\it $H$-free} if $G$ does not contain an $H$ as a subgraph. The Tur\'an function, denoted by $\ex(n,H)$, is the largest possible number of edges of an $H$-free graph on $n$ vertices. That is,
$$\ex(n,H)=\max\{e(G):|V(G)|=n,  \mbox{ $G$ is $H$-free}\}.$$
And for positive integers $n\ge r$, the Tur\'an graph, denoted by $T_{n,r}$, is the complete $r$-partite graph on $n$ vertices with part sizes that differ by at most one (also called the complete balanced $r$-partite graph). The well-known Tur\'{a}n Theorem states that $ex(n, K_{r+1})=e(T_{n,r})$ and $T_{n,r}$ is the only extremal graph for complete graph $K_{r+1}$.

A {\it $k$-fan}, denoted by $F_k$, is a graph on $2k+1$ vertices consisting of $k$ triangles which intersect in exactly one common vertex. In 1995, Erd\"{o}s et al.~\cite{Erdos} gave  the value of $ex(n, F_k)$ and determined the extremal graphs for $F_k$ as follows.
\begin{thm}[\cite{Erdos}]\label{THM:Fk}
For $k\geqs1$ and $n\geqs50k^2$,$$\ex(n,F_k)=e(T_{n,2})+g(k),$$
 where
 $$g(k)=\left\{
\begin{aligned}
&k^2-k & &\mbox{if $k$ is odd,}\\
&k^2-\frac{3}{2}k & &\mbox{if $k$ is even.}\\
\end{aligned} \right.$$
Moreover, when $k$ is odd, the extremal graph must be a $T_{n,2}$ with two vertex disjoint copies of $K_k$ embedding in one partite set. When $k$ is even, the extremal graph must be a $T_{n,2}$ with a graph having $2k-1$ vertices, $k^2-\frac{3}{2}k$ edges with maximum degree $k-1$ embedded in one partite set.
\end{thm}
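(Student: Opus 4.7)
The plan is to combine an explicit extremal construction for the lower bound with a stability-plus-local-structure argument for the upper bound, exploiting the basic fact that $G$ contains $F_k$ centered at a vertex $w$ if and only if $\nu(G[N(w)])\geq k$.

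For the lower bound, I would exhibit the claimed graphs. When $k$ is odd, start from $T_{n,2}$ and embed two vertex-disjoint copies of $K_k$ into the larger part $A$; when $k$ is even, embed into $A$ an almost-regular graph $H$ on $2k-1$ vertices with $\Delta(H)=k-1$ and $\sum_{v\in V(H)}\deg_H(v)=2k^2-3k$. Verifying $F_k$-freeness amounts to checking $\nu(G[N(w)])\le k-1$ for every $w$. For $w$ in the opposite part $B$, the induced graph $G[N(w)]$ is exactly $H$, whose matching number is $k-1$ in each case (by parity in the odd case, and because a matching saturates only an even number of the $2k-1$ vertices in the even case). For $w\in V(H)$, the neighborhood in $G$ consists of $N_H(w)$ (at most $k-1$ vertices of $A$) together with all of $B$; since $B$ is independent in $G[N(w)]$, any matching is bounded by the number of $H$-neighbors, namely $k-1$.

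For the upper bound, let $G$ be extremal and $(A,B)$ a partition of $V(G)$ maximizing $e_G(A,B)$, so that $\deg_A(v)\le\deg_B(v)$ for every $v\in A$ and symmetrically on $B$. The rough strategy has three steps. First, a stability step: using the Erd\"{o}s--Stone asymptotic $\ex(n,F_k)=n^2/4+O(k^2)$ together with $n\geq 50k^2$, show that all but $O(k)$ vertices of $G$ have at most $k-1$ neighbors in their own part of $(A,B)$, and that $\bigl||A|-|B|\bigr|=O(k)$. Second, a cleaning step: let $W\subseteq V(G)$ be the set of vertices incident to an edge inside $A$ or inside $B$; by the first step $|W|$ is bounded, and by flipping a constant number of vertices between $A$ and $B$ one may arrange $|W|\le 2k-1$. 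Applying the local matching condition at a vertex $w$ in the opposite part (which has all of $W$ in its $G$-neighborhood by the first step) forces $\nu(G[W])\le k-1$, and applying it at $w\in W$ forces $\Delta(G[W])\le k-1$. Third, an extremal step: solve the auxiliary problem $\max\{e(H):|V(H)|\le 2k-1,\ \nu(H)\le k-1,\ \Delta(H)\le k-1\}$ via an Erd\"{o}s--Gallai type calculation; the optimum is $k^2-k$, attained by two disjoint copies of $K_k$ when $k$ is odd, and $k^2-\tfrac{3}{2}k$ when $k$ is even, where $2K_k$ is ruled out because its matching number is $k$ in that case.

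The main obstacle I anticipate is the exact characterization of the extremal graph, not merely the edge count: the parity gap between $g(k)$ for odd and even $k$ arises precisely because $2K_k$ satisfies $\nu=k-1$ only in the odd case, and for even $k$ one must show that the almost-regular construction on $2k-1$ vertices is the unique optimum among graphs meeting the matching and degree constraints. The cleaning step is also delicate: the flips between $A$ and $B$ must simultaneously preserve the max-cut inequality $\deg_A\le \deg_B$ on the relevant side and concentrate all non-bipartite edges onto the small set $W$, and one has to rule out any stray edge outside $W$ that could inflate $e(G)$ beyond the stated bound.
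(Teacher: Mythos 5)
First, a point of order: the paper does not prove Theorem~\ref{THM:Fk} at all --- it is quoted from Erd\H{o}s, F\"uredi, Gould and Gunderson \cite{Erdos} --- so there is no in-paper proof to compare against. Judging your proposal on its own terms and against the machinery this paper does deploy for its main theorem (the max-cut partition, the local condition of Claim~\ref{Claim: C_k,q}, and the Chv\'atal--Hanson bound of Lemma~\ref{LEMMA:f_Nu_Delta}), your overall architecture is the standard and correct one: the observation that $G$ contains an $F_k$ centered at $w$ iff $\nu(G[N(w)])\geqs k$ is right, your lower-bound verification is right, and the plan of max-cut stability plus a local matching condition plus an auxiliary degree/matching extremal problem is exactly how this class of results is proved.

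However, your cleaning step contains a genuine error. You claim that after flipping boundedly many vertices one may arrange $|W|\leqs 2k-1$, and you then pose the auxiliary problem with the constraint $|V(H)|\leqs 2k-1$. For odd $k$ the extremal configuration inside one class is $2K_k$, which occupies $2k$ vertices, so $|W|\leqs 2k-1$ cannot be arranged in the extremal graph itself; worse, under your own constraints $|V(H)|\leqs 2k-1$ and $\Delta(H)\leqs k-1$ one gets $e(H)\leqs\frac{(2k-1)(k-1)}{2}<k^2-k$, contradicting the optimum $k^2-k$ you claim for odd $k$ (for $k=3$: at most $5$ edges against $g(3)=6$). The repair is to drop the cardinality constraint entirely: the Chv\'atal--Hanson function of Lemma~\ref{LEMMA:f_Nu_Delta} gives $f(k-1,k-1)=(k-1)^2+(k-1)=k^2-k$ for odd $k$ and $f(k-1,k-1)=(k-1)^2+\frac{k-2}{2}=k^2-\frac{3}{2}k$ for even $k$, i.e.\ $f(k-1,k-1)=g(k)$ exactly, so the two constraints $\nu(H)\leqs k-1$ and $\Delta(H)\leqs k-1$ already yield the edge bound, and it is the equality analysis in Chv\'atal--Hanson (not a vertex count) that separates $2K_k$ for odd $k$ from the $(2k-1)$-vertex near-regular graphs for even $k$. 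A secondary weakness: your stability step invokes $\ex(n,F_k)=n^2/4+O(k^2)$, which is essentially the statement being proved; for the explicit threshold $n\geqs 50k^2$ one needs an independent quantitative argument (for instance a minimum-degree reduction in the spirit of Lemma~\ref{Lemma:deltaGgeq}) rather than an appeal to Erd\H{o}s--Stone.
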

In 2003, Chen et al.~\cite{Wei} proved that $ex(n, \Fkr)=e(T_{n,2})+g(k)$, where $\Fkr$ is a graph consisting of $k$ complete graphs of order $r$ which intersect in exactly one common vertex and $g(k)$ is the same as in Theorem~\ref{THM:Fk}. Recently, Gelbov~\cite{Glebov-2011} and Liu~\cite{Liu-2013} gave the extremal graphs for blow-ups of paths~\cite{Glebov-2011}, cycles and a large class of trees~\cite{Liu-2013}.

In this paper, motivated by the results in~\cite{Wei,Erdos,Glebov-2011,Liu-2013},  we generalize Theorem \ref{THM:Fk} in another way.  For a positive integer $k$ and an odd integer $q$ with $q\ge 5$, let $\ckq$ be the  graph consisting of $k$ $q$-cycles which intersect exactly in one common vertex, called the center of it.
For $n\geqs4(k-1)^2$, let $\mathcal{F}_{n,k}$ be the family of graphs with each member is a Tur\'an graph $T_{n,2}$ with a complete bipartite graph $K_{k-1,k-1}$ embedded into one class. Our main result is as follows.

\begin{thm}\label{THM:MAIN THEOREM}
 For an integer $k\geqs2$ and an odd integer $q\geqs5$, there exists $n_1(k,q)\in\mathbb{N}$ such that for all $n\geqs n_1(k,q)$, we have $$\ex(n,\ckq)=e(T_{n,2})+(k-1)^2,$$      and the only extremal graphs for $C_{k,q}$  are members of $\mathcal{F}_{n,k}$.
\end{thm}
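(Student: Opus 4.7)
My plan is to prove Theorem~\ref{THM:MAIN THEOREM} by combining a short check of the lower bound construction with a stability-based upper bound.

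\textbf{Lower bound.} Let $G\in\mathcal{F}_{n,k}$, with $T_{n,2}$-parts $A,B$ and $K_{k-1,k-1}$-sides $A_1,A_2\subseteq A$. The 2-coloring $A\mapsto 0$, $B\mapsto 1$ is proper on every edge of $T_{n,2}$, so every odd cycle of $G$ uses an odd (hence $\geqslant 1$) number of $K_{k-1,k-1}$-edges, each with one endpoint in $A_1$ and one in $A_2$. If a $\ckq$ with center $v$ and cycles $C_1,\dots,C_k$ existed in $G$, a bipartite accounting of the $A_1$- and $A_2$-endpoints across the $k$ cycles (which are vertex-disjoint outside $v$) would force at least $k$ distinct vertices of one of $A_1,A_2$ to be used, contradicting $|A_1|=|A_2|=k-1$.

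\textbf{Stability setup.} Assume $G$ is $\ckq$-free with $e(G)\geqslant e(T_{n,2})+(k-1)^2$. Since $\chi(\ckq)=3$, a quantitative Erd\H{o}s--Simonovits stability theorem with parameter $\gm=\gm(k,q)$ produces a max-cut partition $V(G)=V_1\cup V_2$ with $\lowbound\leqslant|V_i|\leqslant\upbound$ and $\einG\leqslant\gmnn$, and max-cut maximality yields $\deg(v,V_{3-i})\geqslant\deg(v,V_i)$ for every $v\in V_i$. Declaring $v$ \emph{bad} if $\deg(v,V_i)\geqslant\bad$ bounds the number of bad vertices by $\inactive$, leaving a dominant set of \emph{active} vertices in each $V_i$ whose cross-degree is $(1/2-o(1))n$.

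\textbf{Main inequality $\einG\leqslant(k-1)^2$.} The heart of the proof is to establish $\einG\leqslant(k-1)^2$. I prove the contrapositive: from the within-part graph $H:=G[V_1]\cup G[V_2]$ with $e(H)\geqslant(k-1)^2+1$ I construct a $\ckq\subseteq G$. The basic gadget is that, given any bad edge $xy\in E(H)$, any prescribed center $u$, and any forbidden set $U$ of size $O(kq)$, one can close $xy$ into a $C_q$ through $u$ avoiding $U$ by two alternating paths from $u$ to $x$ and from $y$ to $u$ of odd lengths summing to $q-1$; the parities match because $q$ is odd, and the paths exist greedily because every active vertex has $(1/2-o(1))n$ cross neighbors outside $U$. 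I then split into three cases: \emph{(a)} $\nu(H)\geqslant k$: pick $k$ disjoint bad edges and close each into a $C_q$ through a single fixed active center; \emph{(b)} $\Delta(H)\geqslant k$: some vertex $s$ has $k$ bad neighbors $t_1,\dots,t_k$ and each $st_j$ is closed into a $C_q$ through $s$; \emph{(c)} $\nu(H),\Delta(H)\leqslant k-1$: K\"onig applied to each bipartite $H[V_i]$ gives $e(H)\leqslant (k-1)\nu(H)\leqslant(k-1)^2$, contradicting $e(H)\geqslant(k-1)^2+1$, so some $G[V_i]$ must contain an odd cycle, which one reshapes into a $C_q$ by padding or trimming with cross-paths and combines with $k-1$ further disjoint gadgets. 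In every case a $\ckq\subseteq G$ emerges, contradicting $\ckq$-freeness.

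\textbf{Conclusion, uniqueness, and main obstacle.} Combining $\einG\leqslant(k-1)^2$ with $\ecrossG\leqslant|V_1||V_2|\leqslant e(T_{n,2})$ yields $\ex(n,\ckq)\leqslant e(T_{n,2})+(k-1)^2$. For uniqueness, equality forces $\{|V_1|,|V_2|\}=\{\lf n/2\rf,\lc n/2\rc\}$, $G[V_1,V_2]$ complete bipartite, and $e(H)=(k-1)^2$; revisiting the case analysis at equality shows that $H$ must be bipartite with $\nu(H)=\Delta(H)=k-1$, so K\"onig's equality case forces $H\cong K_{k-1,k-1}$ and hence $G\in\mathcal{F}_{n,k}$. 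I expect the subtlest step to be case (c), where a short odd cycle of $H$ must be reshaped into a $C_q$ while arranging $k-1$ further vertex-disjoint $C_q$'s through a common center; this requires a case split on the length of the odd cycle and careful parity bookkeeping, both enabled by the assumption $q\geqslant 5$. The parallel greedy embedding of $k$ alternating paths in cases (a) and (b) is the other main source of technical overhead and is what ultimately fixes the threshold $n_1(k,q)$.
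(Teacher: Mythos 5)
Your overall architecture (lower bound via the parity of within-class edges on odd cycles, stability plus max-cut, then bounding the within-class edge count by $(k-1)^2$) matches the paper, and your lower bound and your cases (a) and (b) are sound. The genuine gap is in case (c) and, consequently, in the uniqueness argument. The configuration that actually forces a $\ckq$ is not ``one reshaped odd cycle plus $k-1$ disjoint matching edges'' but a \emph{star at the center plus a disjoint matching} totalling $k$ within-class edges: the $k$ cycles of $\ckq$ may share only the center, so the $k$ required within-class edges must pairwise intersect in at most the center. Your case (c) recipe fails concretely: take $k=3$, $q=5$ and $H=G[V_1]$ a single $C_5$, so $e(H)=5>(k-1)^2=4$, $\nu(H)=\Delta(H)=2\leqs k-1$, and $H$ is non-bipartite. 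Using the whole $C_5$ as one reshaped $q$-cycle leaves \emph{no} further within-class edges for the other $k-1=2$ gadgets, yet $C_{3,5}$ does exist in $G$ --- via a center $x$ on the $C_5$ with $\deg_H(x)+\nu(H-N_H[x])=2+1=3$. What is really needed is the statement that $\deg(x)+\nu(H-N[x])\leqs k-1$ for all $x$ forces $e(H)\leqs (k-1)^2$; this is the paper's Lemma~\ref{LEMMA:MAIN LEMMA}, whose proof is not a one-liner (it needs the Chv\'atal--Hanson bound of Lemma~\ref{LEMMA:f_Nu_Delta} plus a separate analysis killing the near-extremal configurations such as three disjoint triangles when $k=4$). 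Your sketch does not supply this, and K\H{o}nig alone cannot replace it because the obstruction is not only non-bipartiteness: the bound $e\leqs\Delta\cdot\nu$ with $\nu,\Delta\leqs k-1$ can be tight for graphs other than $K_{k-1,k-1}$.

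The same missing lemma undermines your uniqueness step. At equality $e(H)=(k-1)^2$ your case (c) gives no contradiction, so you have not ruled out non-bipartite $H$ (e.g.\ $3K_3$ for $k=4$ has exactly $(k-1)^2=9$ edges). And even granting bipartiteness, ``K\H{o}nig's equality case'' does not force $H\cong K_{k-1,k-1}$: the disjoint union of $k-1$ stars $K_{1,k-1}$ is bipartite with $e=(k-1)^2$, $\nu=\Delta=k-1$, and achieves equality in $e\leqs\Delta\cdot\tau$, yet it is not $K_{k-1,k-1}$. It is excluded only because a star center $x$ there satisfies $\deg(x)+\nu(H-N[x])=2k-3\geqs k$, i.e.\ by the full forbidden-configuration condition at every vertex. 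This is exactly the equality analysis in Lemma~\ref{LEMMA:MAIN LEMMA} (equality if and only if $K_{r,r}$), which your proposal would need to reconstruct. A secondary omission: you must also show all within-class edges lie in a single class (the paper's Claim~\ref{CLAIM:e0e1=0}) before the $K_{k-1,k-1}$ conclusion makes sense, since a star-plus-matching spread over both classes is otherwise possible.
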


The remaining of the paper is arranged as follows. Section 2 gives some lemmas. Section 3 gives the proof of Theorem~\ref{THM:MAIN THEOREM}.

\section{Lemmas}

The following two lemmas are useful to estimate the number of edges of a graph with restrict degree and matching number.

\begin{lem}[ Chav\'atal~\cite{Hanson}]\label{LEMMA:f_Nu_Delta} For any graph $G$ with maximum degree $\Dta\geqs1$ and matching number $\nu\geqs1$, then $e(G)\leqs f(\nu,\Delta)=\nu\Dta+{\lf\frac{\Dta}{2}\rf}{\lf\frac{\nu}{\lc\Dta/2\rc}\rf}\le \nu(\Delta+1)$.
\end{lem}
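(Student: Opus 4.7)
The natural plan is to work with a maximum matching. Let $M$ be a maximum matching of $G$ with $|M|=\nu$, write $V(M)$ for its $2\nu$ endpoints, and set $S=V(G)\setminus V(M)$. Since $M$ is maximum, $S$ is an independent set: any edge inside $S$ could be appended to $M$. In particular every edge of $G$ has at least one endpoint in $V(M)$, and the degree-sum identity gives
$$2\,e(G[V(M)]) + e_G(V(M),S) \;=\; \sum_{v\in V(M)}\deg_G(v) \;\leq\; 2\nu\Delta.$$

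The second ingredient is the absence of augmenting paths of length three: for any matching edge $u_iv_i$, two distinct vertices $s,s'\in S$ with $s\sim u_i$ and $s'\sim v_i$ would yield the augmenting path $s\,u_i\,v_i\,s'$, contradicting the maximality of $M$. Hence for each matching edge either only one of the two endpoints has neighbors in $S$, or both endpoints share the same unique neighbor in $S$ (forming a triangle through $V(M)$). This structural restriction, combined with the degree-sum inequality above, bounds the cross-edges $e_G(V(M),S)$ and pushes the edge count toward $\nu\Delta$ plus a correction coming only from edges inside $V(M)$.

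To extract the exact value $f(\nu,\Delta)$, I would group the matching edges into blocks of size $\lceil\Delta/2\rceil$ and argue that each block can support at most $\lfloor\Delta/2\rfloor$ additional edges beyond the $\nu\Delta$ allowance. The extremal configuration to keep in mind is a disjoint union of cliques of order $\Delta+1$ (each carrying matching number $\lceil\Delta/2\rceil$ and $\binom{\Delta+1}{2}$ edges) together with one smaller residual clique; this is exactly where the block count $\lfloor\nu/\lceil\Delta/2\rceil\rfloor$ and the per-block slack $\lfloor\Delta/2\rfloor$ appear. The weak inequality $f(\nu,\Delta)\leq \nu(\Delta+1)$ is then immediate from
$$\lfloor\Delta/2\rfloor\lfloor\nu/\lceil\Delta/2\rceil\rfloor \;\leq\; \lfloor\Delta/2\rfloor\cdot \nu/\lceil\Delta/2\rceil \;\leq\; \nu,$$
since $\lfloor\Delta/2\rfloor\leq \lceil\Delta/2\rceil$.

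The main obstacle is the exact optimization in the last step: the augmenting-path argument produces the correct qualitative picture quickly, but matching the precise floor--ceiling constants requires either a careful induction on $\Delta$ (removing a maximum-degree vertex together with its matched partner while tracking parities of both $\Delta$ and $\nu$) or a direct block decomposition that shows the extremal clique-union configuration is optimal. Since the statement is classical and attributed to Chv\'atal--Hanson, one could also invoke the original argument rather than redo this calculation.
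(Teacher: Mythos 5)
The paper does not actually prove this lemma: the bound $e(G)\leqslant f(\nu,\Delta)$ is imported wholesale from Chv\'atal--Hanson~\cite{Hanson}, and the only content the paper itself is responsible for is the appended inequality $f(\nu,\Delta)\leqslant\nu(\Delta+1)$. That part of your proposal is correct and complete: $\lfloor\Delta/2\rfloor\,\lfloor\nu/\lceil\Delta/2\rceil\rfloor\leqslant\bigl(\lfloor\Delta/2\rfloor/\lceil\Delta/2\rceil\bigr)\nu\leqslant\nu$ is exactly the needed arithmetic, and your closing suggestion to ``invoke the original argument'' for the main bound is precisely what the paper does.

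As a standalone proof of $e(G)\leqslant f(\nu,\Delta)$, however, your sketch has a genuine gap at the decisive step. The two structural facts you do establish --- that $S=V(G)\setminus V(M)$ is independent, and that no matching edge can see two distinct vertices of $S$ from its two ends --- are correct but nowhere near sufficient: combined with the degree sum they give only $e(G)=\sum_{v\in V(M)}\deg_G(v)-e(G[V(M)])\leqslant 2\nu\Delta-\nu$, which already exceeds $\nu(\Delta+1)$ once $\Delta\geqslant 3$, let alone $f(\nu,\Delta)$. The assertion that the matching edges can be grouped into blocks of size $\lceil\Delta/2\rceil$ each supporting at most $\lfloor\Delta/2\rfloor$ edges beyond the $\nu\Delta$ allowance \emph{is} the Chv\'atal--Hanson theorem; it is stated, not argued, and nothing in the sketch explains why the excess localizes into such blocks. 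Moreover the guiding extremal picture is off: for odd $\Delta$ a disjoint union of $t$ copies of $K_{\Delta+1}$ has $t\binom{\Delta+1}{2}$ edges while $f\bigl(t\lceil\Delta/2\rceil,\Delta\bigr)=t\binom{\Delta+1}{2}+t\lfloor\Delta/2\rfloor$, so the configuration you name does not attain the bound and cannot certify its tightness. Either carry out the missing optimization (e.g.\ by the induction you allude to) or, as the paper does, cite~\cite{Hanson} for the main inequality and prove only the final elementary comparison.
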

The following stability result due to Erd\"{o}s~\cite{Erdos-66} and Simonovits~\cite{Simonovits-66} gives the rough structure of the extremal graphs for a graph $H$ with $\chi(H)=r\geqs3$ and $H\neq{K_r}$.

\begin{lem}[\cite{Erdos-66,Simonovits-66}]\label{LEMMA:partition} Let $H$ be a graph with $\chi(H)=r\geqs3$ and $H\neq{K_r}$. Then, for every $\gm>0$, there exists $\delta>0$ and $n_0=n_0(H,\gm)\in\mathbb{N}$ such that the following holds. If $G$ is an $H$-free graph on $n\geqs{n_0}$ vertices with $e(G)\geqs\ex(n,H)-\delta{n^2}$, then there exists a partition of $V(G)=V_1\dot{\cup}\cdots\dot{\cup}V_{r-1}$ such that $\sum^{r-1}_{i=1}e(V_i)<\gm n^2$.
\end{lem}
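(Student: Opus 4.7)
The plan is to prove Lemma~\ref{LEMMA:partition} via Szemer\'edi's Regularity Lemma combined with the Erd\H{o}s--Stone--Simonovits theorem and a stability version of Tur\'an's theorem for $K_r$. First, since $\chi(H)=r\geqs 3$, the Erd\H{o}s--Stone--Simonovits theorem yields
\[
\ex(n,H)=\left(1-\tfrac{1}{r-1}\right)\binom{n}{2}+o(n^2),
\]
so the hypothesis $e(G)\geqs\ex(n,H)-\delta n^2$ forces the edge density of $G$ to lie close to $1-\tfrac{1}{r-1}$, the Tur\'an density of $K_r$.

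Next, I would apply the Regularity Lemma to $G$ with a small parameter $\epsilon=\epsilon(\gm)$ to obtain an $\epsilon$-regular partition $V_0,U_1,\ldots,U_m$, where $|V_0|\leqs\epsilon n$ and the clusters $U_i$ have equal size. Form the reduced graph $R$ on $[m]$ in which $ij\in E(R)$ iff the pair $(U_i,U_j)$ is $\epsilon$-regular with density at least a fixed threshold $d=d(\gm)$. The Embedding (Key) Lemma guarantees that if $R$ contains a copy of $K_r$, then $G$ contains a copy of every fixed graph of chromatic number $r$ and bounded size---in particular $H$, contradicting $H$-freeness. Hence $R$ is $K_r$-free, and Tur\'an's theorem gives $e(R)\leqs e(T_{m,r-1})$.

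The edges of $G$ decompose into categories: edges within clusters, edges incident to $V_0$, edges across irregular pairs, edges across regular pairs of density below $d$, and edges in the ``heavy'' pairs corresponding to edges of $R$. A standard accounting gives
\[
e(G)\leqs \frac{e(R)}{\binom{m}{2}}\binom{n}{2}+c(\epsilon,d,1/m)\,n^2,
\]
where $c\to 0$ as $\epsilon,d,1/m\to 0$. Combined with the lower bound on $e(G)$ and the Erd\H{o}s--Stone estimate above, this forces $e(T_{m,r-1})-e(R)=o(m^2)$.

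Finally, a stability version of Tur\'an's theorem for $K_r$---provable directly by symmetrization or by induction on $r$, and hence not circular---states that any $K_r$-free graph on $m$ vertices with at least $e(T_{m,r-1})-o(m^2)$ edges admits a partition of its vertex set into $r-1$ classes with $o(m^2)$ edges inside. Lifting this partition of $V(R)$ back to $V(G)$ by placing each cluster $U_i$ entirely in the class dictated by $R$ and distributing $V_0$ arbitrarily yields the desired partition $V(G)=V_1\dot{\cup}\cdots\dot{\cup}V_{r-1}$ with $\sum_{i=1}^{r-1}e(V_i)<\gm n^2$, provided $\epsilon,d$ are chosen small enough and $n_0$ is taken large enough in terms of $\gm$ and $H$. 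The main obstacle is to order the parameters $\delta\ll\epsilon\ll d\ll 1/M(\epsilon)\ll \gm$ consistently and to carry out the stability step for $K_r$ without invoking the very lemma being proved; this is handled by the direct combinatorial stability argument available for the $K_r$ case.
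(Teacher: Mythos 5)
The paper does not prove this lemma at all: it is imported verbatim as a black box from Erd\H{o}s and Simonovits (1966/1968), whose original arguments predate Szemer\'edi's Regularity Lemma and proceed by direct combinatorial means (symmetrization and ``progressive induction'' on the structure of near-extremal graphs). Your proposal is the standard modern re-derivation via the Regularity Lemma, and its architecture is sound: Erd\H{o}s--Stone--Simonovits pins the density, the Embedding Lemma forces the reduced graph $R$ to be $K_r$-free, the edge accounting transfers near-extremality from $G$ to $R$, and a stability theorem for $K_r$ (proved independently, so no circularity) partitions $V(R)$ and hence $V(G)$. What this buys is conceptual modularity --- all the work for a general $H$ of chromatic number $r$ is reduced to the clique case --- at the cost of tower-type dependence of $n_0$ on $\gm$, whereas the original proof gives better quantitative control. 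Two soft spots you should repair in a full write-up: (i) the entire content of the theorem now sits in the ``direct combinatorial stability argument for $K_r$,'' which you assert but do not sketch; this step is genuinely nontrivial and is exactly what Erd\H{o}s and Simonovits actually prove, so as written your argument is a reduction rather than a proof. (ii) The parameter hierarchy $\delta\ll\epsilon\ll d\ll 1/M(\epsilon)\ll\gm$ is inconsistent as stated: $M(\epsilon)$ is a tower in $1/\epsilon$, so one cannot have $d$ both above $\epsilon$ and below $1/M(\epsilon)$, and no such requirement is needed --- the correct order is $\epsilon\ll d\ll\gm$ with a lower bound $m\geqs 1/\epsilon$ on the number of clusters to absorb the $n^2/m$ term, $M=M(\epsilon)$ only bounding $m$ from above, and $\delta$ and $1/n_0$ chosen last.
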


The following is a simple observation.
\begin{obs}\label{OBS: o1}
Let $G$ be a graph with no isolated vertex. If $\Delta(G)\le 2$, then $$\nu(G)\ge \frac{|V(G)|-\omega(G)}2,$$ where $\omega(G)$ is the number of components of $G$.
\end{obs}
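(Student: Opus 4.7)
The plan is to use the structural fact that any graph with maximum degree at most $2$ and no isolated vertices is a disjoint union of paths (on at least two vertices) and cycles. First, I would decompose $G$ into its connected components $C_1, C_2, \ldots, C_{\omega(G)}$; because $\Delta(G)\leqs 2$ and $\delta(G)\geqs 1$, each $C_i$ is forced to be either a path on $m_i \geqs 2$ vertices or a cycle on $m_i \geqs 3$ vertices.

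For each such component on $m_i = |V(C_i)|$ vertices, a direct construction (alternate edges along the path or around the cycle) shows that $\nu(C_i) = \lf m_i/2 \rf$. In particular, $\nu(C_i) \geqs (m_i - 1)/2$, with equality possible only when $C_i$ is an odd path. Since matching numbers are additive over connected components, summing this inequality yields
\begin{equation*}
\nu(G) \;=\; \sum_{i=1}^{\omega(G)} \nu(C_i) \;\geqs\; \sum_{i=1}^{\omega(G)} \frac{m_i - 1}{2} \;=\; \frac{|V(G)| - \omega(G)}{2},
\end{equation*}
which is exactly the claimed bound.

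There is no substantive obstacle here; the only points requiring a line of care are the initial structure theorem for graphs of maximum degree $2$ (to rule out components that are not paths or cycles) and the floor estimate $\lf m_i/2\rf \geqs (m_i-1)/2$, which accounts for the loss of $1/2$ per odd component and is precisely why the bound is stated in terms of $\omega(G)$ rather than a stronger quantity.
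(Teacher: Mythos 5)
Your proof is correct and follows essentially the same route as the paper's: each component is a path or a cycle, has matching number $\left\lfloor m_i/2\right\rfloor\ge (m_i-1)/2$, and the bound follows by summing over components. (Only your parenthetical aside is slightly off --- equality $\nu(C_i)=(m_i-1)/2$ also holds for odd cycles, not just odd paths --- but this does not affect the argument.)
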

\begin{proof}
Since $\Delta(G)\le 2$, each component of $G$ is a path or a cycle. Hence each component $C$ of $G$ has matching number at least $\frac{|V(C)|-1}2$. This implies the desired result.
\end{proof}

\begin{lem}\label{LEMMA:MAIN LEMMA} Let $G$ be a graph with no isolated vertex. If for all $x\in{V(G)}$, $\deg(x)+\nu(G-N[x])\leqs{r}$, then $e(G)\leqs r^2$. Moreover, the equality holds if and only if $G=K_{r,r}$.
\end{lem}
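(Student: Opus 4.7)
The plan is to extract the bound from a single maximum-degree vertex and then apply Chv\'atal's bound to what remains. Let $x\in V(G)$ be a vertex of maximum degree $\Delta=\deg(x)$, and set $A=N(x)$ and $B=V(G)\setminus N[x]$, so $|A|=\Delta$. The hypothesis applied to $x$ immediately gives $\Delta\leq r$ (since $\nu(G-N[x])\geq 0$) and $\nu(G[B])\leq r-\Delta$. Decompose
\[
e(G)=\Delta+e(A)+e(A,B)+e(B).
\]

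For the first three pieces, note that each $a\in A$ has degree at most $\Delta$ in $G$, so
\[
\Delta+2e(A)+e(A,B)=\sum_{a\in A}\deg(a)\leq\Delta|A|=\Delta^{2},
\]
which rearranges to $e(A)+e(A,B)\leq\Delta(\Delta-1)-e(A)$. For the last piece, applying Lemma~\ref{LEMMA:f_Nu_Delta} to $G[B]$ (whose maximum degree is at most $\Delta$ and whose matching number is at most $r-\Delta$) gives $e(B)\leq(r-\Delta)(\Delta+1)$. Combining,
\[
e(G)\leq\Delta+\Delta(\Delta-1)-e(A)+(r-\Delta)(\Delta+1)=(r-1)\Delta+r-e(A)\leq r^{2},
\]
using $\Delta\leq r$ and $e(A)\geq 0$.

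For the equality case, every inequality must be tight, forcing $\Delta=r$, $e(A)=0$, $e(B)=0$, and every $a\in A$ having degree exactly $r$. Hence $|A|=r$, both $A$ and $B$ are independent, and each $a\in A$ has one edge to $x$ and exactly $r-1$ edges into $B$. Applying the hypothesis at any $a\in A$ now gives $\nu(G-N[a])=0$, i.e.\ $G-N[a]$ is edgeless. But the remaining edges of $G-N[a]$ are precisely those from $A\setminus\{a\}$ to $B\setminus N_B(a)$; their absence forces $N_B(a')\subseteq N_B(a)$ for every $a'\in A$, and since $|N_B(a')|=|N_B(a)|=r-1$, all vertices of $A$ share a common $B$-neighborhood $B_{0}$ with $|B_{0}|=r-1$.

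Finally, any $b\in B\setminus B_{0}$ would have no neighbor in $A$, none in $\{x\}$, and none in $B$, contradicting the no-isolated-vertex hypothesis; hence $B=B_{0}$, and $G=K_{r,r}$ with parts $\{x\}\cup B$ and $A$. The main obstacle is not the edge count, which falls out cleanly from the degree identity at $x$ combined with Lemma~\ref{LEMMA:f_Nu_Delta}, but the equality analysis: one must invoke the hypothesis at every vertex of $A$ (not only at $x$) to deduce that $A$ has a common $B$-neighborhood, and then use the no-isolated-vertex hypothesis to eliminate $B\setminus B_{0}$.
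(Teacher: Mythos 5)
Your proof is correct, and it takes a genuinely different route from the paper's. The paper first proves $\nu(G)\leqslant r$ by a separate matching argument, then splits into the cases $\Delta(G)<r$ and $\Delta(G)=r$: the first case applies the Chv\'atal--Hanson bound $f(\nu,\Delta)$ to all of $G$ and needs a fiddly sub-argument for $r=3$ (ruling out three disjoint triangles via Observation~\ref{OBS: o1}), while the second case runs a degree-sum computation at a maximum-degree vertex using $e(G-N[x])=0$. You instead give a single unified argument: decomposing $e(G)$ around one maximum-degree vertex $x$ as $\Delta+e(A)+e(A,B)+e(B)$, bounding the first three terms by the degree-sum identity $\sum_{a\in A}\deg(a)\leqslant\Delta^2$, and bounding $e(B)$ by applying only the weak form $e\leqslant\nu(\Delta+1)$ of Lemma~\ref{LEMMA:f_Nu_Delta} to $G[B]$, whose matching number is controlled directly by the hypothesis at $x$. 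This buys you a cleaner computation $e(G)\leqslant(r-1)\Delta+r-e(A)\leqslant r^2$ with no case split on $\Delta$, no special case for small $r$, and no need for the global bound $\nu(G)\leqslant r$. Your equality analysis (invoking the hypothesis at each $a\in A$ to force a common $B$-neighborhood, then using the no-isolated-vertex condition to pin down $|B|=r-1$) is in the same spirit as the paper's but is carried out symmetrically over all of $A$ rather than at a single $x_1$; both are valid. Two cosmetic points you may wish to add: the application of Lemma~\ref{LEMMA:f_Nu_Delta} to $G[B]$ should note the trivial case where $G[B]$ is edgeless (the lemma as stated assumes $\nu\geqslant1$), and the ``if'' direction of the equality statement (that $K_{r,r}$ indeed satisfies the hypothesis with $e=r^2$) deserves a one-line check.
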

\begin{proof}
 Clearly, $\Dta(G)\leqs{r}$. We claim that $\nu(G)\leqs{r}$. Let $u_1v_1,\cdots,u_\ell v_\ell$ be a matching in $G$. Wlog, assume that
 \begin{eqnarray*}
& &\{u_1,v_1,\cdots,u_s,v_s,u_{s+1},\cdots,u_{s+t}\}\subseteq{N[u_1]},\\
& &\{v_{s+1},\cdots,v_{s+t},u_{s+t+1},v_{s+t+1},u_\ell,v_\ell\}\subseteq{V(G)\setminus N[u_1]}.
\end{eqnarray*}
Then $s\geqs1$, $2s+t-1\leqs\deg(u_1)$ and $\ell-(s+t)\leqs\nu(G-N[u_1])$. Thus
 $$\ell\leqs\nu(G-N[u_1])+s+t\leqs\deg(u_1)+\nu(G-N[u_1])\leqs{r},$$ the claim is true.
Now we prove the result according to the following two cases.

Case 1. $\Dta(G)<r$.  Then, by Lemma \ref{LEMMA:f_Nu_Delta},  we have $e(G)\leqs{f(\nu, \Delta)}\le r(r-1+1)=r^2$, and the equality holds only if $\nu=r$ and $\Delta=r-1$. We claim that the equality does not hold in this case. For $r\ge 4$, $e(G)\le f(r, r-1)=r(r-1)+{\lf\frac{r-1}{2}\rf}{\lf\frac{r}{\lc (r-1)/2\rc}\rf}<r^2$. For $r=3$, $e(G)\leqs f(3, 2)=3^2=9$. Suppose to the contrary that $e(G)=9$.  Since $\Delta(G)\le 2$ and $G$ has no isolated vertex, $|V(G)|\ge e(G)=9$ (the equality holds if and only if $G$ is 2-regular) and $\omega(G)\le \nu(G)=3$. By Observation~\ref{OBS: o1}, $$3=\nu(G)\ge \frac{|V(G)|-\omega(G)}2\ge \frac{|V(G)|-3}2.$$ Hence $|V(G)|\ge 9$. Thus $|V(G)|=9$ (and so $G$ is 2-regular) and $\omega(G)=3$.  Therefore, $G$ consists of  three vertex-disjoint triangles. But this contradicts the assumption that  $\deg(x)+\nu(G-N[x])\leqs{r}$ for all $x\in{V(G)}$.

\vspace{5pt}
 Case 2. $\Dta(G)=r$. Choose $x\in{V(G)}$ such that $\deg(x)=r$, then $\nu(G-N[x])=0$. Hence $e(G-N[x])=0$ and so each vertex in $G-N[x]$ must be adjacent to  vertices in $N(x)$. Let $N(x)=\{x_1,\cdots,x_r\}$. For each $i\in [1,r]$, let $d_i=\deg(x_i)$ and $\tilde{d_i}=\deg_{G[N(x)]}(x_i)$. Then
\begin{eqnarray*}
e(G)&=&e(G[N(x)])+e(N(x),V(G)\setminus N(x))=\frac12\sum_{i=1}^r\tilde{d_i}+\sum_{i=1}^r(d_i-\tilde{d_i})\\
&=&\sum_{i=1}^rd_i-\frac12\sum_{i=1}^r\tilde{d_i}\leqs r^2-\frac12\sum_{i=1}^r\tilde{d_i}\leqs r^2.
\end{eqnarray*}
Moreover, the equality holds if and only if $d_i=r$ and $\tilde{d_i}=0$ for each $i\in [1,r]$, that is $G$ is a bipartite graph with partites $N(x)=\{x_1, \cdots, x_r\}$ and $V(G)\setminus N(x)$. To show that $G=K_{r,r}$, it suffices to prove that $|V(G)\setminus N(x)|=r$. If not, then $|V(G)\setminus N(x)|>r$. Since $\deg(x_1)=d_1=r$, there must exist a vertex $y\in (V(G)\setminus N(x))\setminus N(x_1)$. Since $G$ has no isolated vertex, $y$ must be adjacent to some vertex $x_j$ with $j\not= 1$. This implies that  $\nu(G-N[x_1])\ge 1$, a contradiction with $\deg(x_1)+\nu(G-N[x_1])\le r$.
\end{proof}

The following lemma states that the members $\mathcal{F}_{n,k}$ are $C_{k,q}-$free.
\begin{lem}\label{LEMMA:Gnk is Ckq-free}
Each member of $\mathcal{F}_{n,k}$ is $\ckq$-free for all $k\ge 2$, $n\geqs4(k-1)^2$, and odd integer $q\ge 5$.
\end{lem}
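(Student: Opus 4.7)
The plan is to argue by contradiction, exploiting the fact that $G$ is ``nearly bipartite'': the only edges violating the bipartition $(A,B)$ of the underlying $T_{n,2}$ are the $(k-1)^2$ edges of the embedded $K_{k-1,k-1}$. Let $G\in\mathcal{F}_{n,k}$, let $(A,B)$ be the bipartition of the underlying $T_{n,2}$, and let $X,Y\subseteq A$ with $|X|=|Y|=k-1$ be the two parts of the embedded $K_{k-1,k-1}$. Then $E(G)$ is the union of all $A$-$B$ edges of $T_{n,2}$ and all $X$-$Y$ edges of the embedded $K_{k-1,k-1}$.

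The first step is the key observation: deleting the $X$-$Y$ edges from $G$ yields the complete bipartite graph with parts $A$ and $B$. In particular every cycle of $G$ avoiding $X$-$Y$ edges is even, so every odd cycle of $G$ uses at least one $X$-$Y$ edge (in fact an odd number of them), and every such edge has one endpoint in $X$ and one in $Y$.

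Next, suppose for contradiction that $\ckq\subseteq G$, with center $v$ and constituent $q$-cycles $C_1,\dots,C_k$ that are pairwise vertex-disjoint except at $v$. Since $q\geqs 5$ is odd, the previous observation gives, for each $i$, an edge $x_iy_i\in E(C_i)$ with $x_i\in X$ and $y_i\in Y$; in particular $V(C_i)\cap Y\neq\emptyset$ for every $i$. By the symmetry of $X$ and $Y$ in the construction, we may assume $v\notin Y$. Then each $y_i$ differs from $v$, and since the $C_i$ share only the vertex $v$, the $y_i$ are pairwise distinct. This exhibits $k$ distinct elements of $Y$, contradicting $|Y|=k-1$.

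I do not anticipate a real obstacle: the proof is essentially a one-line parity-plus-disjointness count once the bipartite skeleton of $G$ is isolated. The only reduction that needs a sentence of justification is the ``WLOG $v\notin Y$'' step, which holds because swapping $X$ and $Y$ yields an automorphism of $G$; note that the lower bound $n\geqs 4(k-1)^2$ is not used in the proof, as it merely guarantees that one class of $T_{n,2}$ is large enough to contain the $2(k-1)$ vertices of $K_{k-1,k-1}$.
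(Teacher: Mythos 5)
Your proof is correct. It opens with the same parity observation as the paper---after deleting the edges of the embedded $K_{k-1,k-1}$ the graph is bipartite, so each of the $k$ constituent $q$-cycles must use at least one (indeed an odd number of) edges of that $K_{k-1,k-1}$---but it closes the argument by a genuinely different and simpler count. The paper works with matching numbers: it first shows the center must lie in $V(K_{k-1,k-1})$, since otherwise the $k$ cycles, being vertex-disjoint away from the center, would supply a matching of size $k$ inside a graph with $\nu(K_{k-1,k-1})=k-1$; then, letting $x$ be the center with $s$ used edges at $x$, it extracts a matching of size $k-s$ in $K_{k-1,k-1}$ minus the $s$ neighbours of $x$, contradicting $\nu(K_{k-1,k-1-s})=k-1-s$. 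You instead observe that every cycle meets the part $Y$ of the embedded complete bipartite graph, use the $X$--$Y$ symmetry to assume the center is not in $Y$, and exhibit $k$ distinct vertices in a set of size $k-1$. This avoids the case analysis and the matching-number computations entirely, and your justification of the ``WLOG'' step (swapping $X$ and $Y$, which are disjoint) is adequate. Both arguments rest only on $|X|=|Y|=k-1$ and the near-bipartiteness of members of $\mathcal{F}_{n,k}$, and you are right that $n\geqs 4(k-1)^2$ plays no role beyond making the embedding possible.
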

\begin{proof}
Suppose to the contrary that there is a graph $G\in \mathcal{F}_{n,k}$ containing a copy of $\ckq$. Let $K$ be the copy of $K_{k-1,k-1}$ in $G$. Then each odd cycle of $C_{k,q}$ must contain odd number of the edges of $K$. Let $A=E(C_{k,q})\cap E(K)$. Then $|A|\ge k$. We claim that the center of $C_{k,q}$ must lie in $K$.   If not, then $G[A]$ contains a matching of order at least $k$ by the structure of $C_{k,q}$, a contradiction with $\nu(K)=k-1$. Let $x\in V(K)$ be the center of $\ckq$.  Assume that $\deg_{G[A]}(x)=s$ and let $E_A(x)$ be the set of edges incident with $x$ in $G[A]$. Then at most $s$ cycles of $C_{k,q}$ intersect  $E_A(x)$, that is $A-E_A(x)$ contains a matching of $K$ of order at least $k-s$. This is impossible since $\nu(K-N_{G[A]}(x))\le k-s-1$.
\end{proof}


\begin{lem}\label{Lemma:deltaGgeq} 
 Let $n_0$ be an integer and let $G$ be a graph on $n\ge n_0+{n_0\choose 2}$ vertices with $e(G)=e(T_{n,2})+j$ for some integer $j>0$. Then $G$ contains a subgraph $G'$ on $n'> n_0$ vertices  such that $\delta(G')\geqs\delta(T_{n',2})$ and $e(G')\geqs e(T_{n',2})+j+n-n'$.
\end{lem}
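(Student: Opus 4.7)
The plan is a greedy vertex-deletion (``cleaning'') argument. Set $G^{(0)}:=G$, and at stage $i$ let $G^{(i)}$ denote the current graph on $n-i$ vertices. If $G^{(i)}$ contains a vertex $v$ with $\deg_{G^{(i)}}(v)<\delta(T_{n-i,2})=\lf(n-i)/2\rf$, remove such a $v$ to obtain $G^{(i+1)}$; otherwise stop and declare $G':=G^{(i)}$, $n':=n-i$. By the stopping rule, $\delta(G')\geqs\delta(T_{n',2})$ automatically, so it remains to verify the edge bound and the inequality $n'>n_0$.

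The key arithmetic is the identity $e(T_{m,2})-e(T_{m-1,2})=\lf m/2\rf$ for $m\geqs 2$, which one checks by treating $m$ even and $m$ odd separately. Consequently, deleting a vertex $v$ with $\deg(v)\leqs\lf(n-i)/2\rf-1$ changes the ``excess above Tur\'an'' by
\[
\bigl(e(G^{(i+1)})-e(T_{n-i-1,2})\bigr)-\bigl(e(G^{(i)})-e(T_{n-i,2})\bigr)\geqs 1.
\]
Since initially this excess equals $j$, induction yields the invariant
\[
e(G^{(i)})\geqs e(T_{n-i,2})+j+i\qquad\text{for every stage }i.
\]
At termination this reads $e(G')\geqs e(T_{n',2})+j+(n-n')$, the desired edge inequality.

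The one step that uses the size hypothesis $n\geqs n_0+\binom{n_0}{2}$, and which I view as the main obstacle, is proving $n'>n_0$. Suppose for contradiction that the procedure does not halt until $n-i\leqs n_0$; then $i\geqs n-n_0$, and the invariant combined with $j\geqs 1$ and $e(T_{n-i,2})\geqs 0$ gives
\[
e(G^{(i)})\geqs j+i\geqs 1+(n-n_0)\geqs 1+\binom{n_0}{2}>\binom{n_0}{2}\geqs\binom{n-i}{2},
\]
contradicting the trivial bound $e(G^{(i)})\leqs\binom{n-i}{2}$. Hence the procedure must terminate with some $n'>n_0$, and this $G'$ satisfies all requirements of the lemma.
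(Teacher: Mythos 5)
Your proposal is correct and follows essentially the same approach as the paper: iteratively delete a vertex of degree below $\lf(n-i)/2\rf$, use the identity $e(T_{m,2})-e(T_{m-1,2})=\lf m/2\rf$ to show the excess over the Tur\'an number grows by at least one per deletion, and derive a contradiction with the bound $\binom{n_0}{2}$ if the process ever reaches $n_0$ vertices. No gaps.
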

\begin{proof}
If $\delta(G)\ge \lf\frac{n}2\rf$, then $G$ is the desired graph and we have nothing to do. So assume that $\delta(G)<\lf\frac{n}2\rf$. Choose $v\in V(G)$ with $\deg_G(v)<\dtaT$. Let $G_1= G-v$. Then $e(G_1)\geqs e(G)-\deg_G(v)\geqs e(T_{n,2})+j-\dtaT+1=e(T_{n-1,2})+j+1$, since $e(T_{n,2})- e(T_{n-1,2})=\dtaT$. We may continue this procedure until we get $G'$ on $n-i$ vertices with $\delta(G')\geqs\delta(T_{n-i,2})$ for some $i<n-n_0$, or until $i=n-n_0$. For the latter case, $G'$ has $n_0$ vertices but $e(G')\geqs e(T_{n_0,2})+j+i>n-n_0\geqs\binom{n_0}{2}$, which is impossible.
\end{proof}

\section{Proof of Theorem \ref{THM:MAIN THEOREM}}
Let $G$ be an extremal graph for $\ckq$ on $n\geqs n_1(k,q)$ ($n_1(k,q)$ is given below) vertices. By Lemma \ref{LEMMA:Gnk is Ckq-free}, $e(G)\geqs e(T_{n,2})+(k-1)^2$.
We will show that $e(G)=e(T_{n,2})+(k-1)^2$ and $G$ is a member of $\mathcal{F}_{n,k}$. Let
\begin{eqnarray*}
&&\gm=\frac 1{1600},\\
&&n_0=n_0(\ckq,\gm)\ \  (\mbox{ which is determined by $\ckq$ and $\gm$ by applying Lemma~\ref{LEMMA:partition}}), \\
&&n_1=n_1(k,q)=n_0+20k^2q+\binom{n_0+20k^2q}{2}.
\end{eqnarray*}

By the choice of $n_1$ and Lemma \ref{Lemma:deltaGgeq}, we may assume $\delta(G)\ge \delta(T_{n,2})=\lf\frac{n}2\rf$, otherwise, we consider a subgraph $G'$ with the desired minimum degree instead of $G$.
 Let $V_0\dot{\cup}V_1$ be a partition of $V(G)$ such that $e(V_0,V_1)$ is maximized. Lemma \ref{LEMMA:partition} implies that $m={e(V_0)+e(V_1)}<\gmnn$.
The following claim asserts that the partition is closed to be balanced.

\begin{claim}\label{LEMMA:V0V1 are closely balanced}
$$\frac{n}{2}-\sgmn<\left|V_i\right|<\frac{n}{2}+\sgmn \ \  \mbox{for $i=0,1$.}$$
Furthermore, $m=e(V_0)+e(V_1)\ge (k-1)^2$ and if the equality holds then $G$ contains a complete balanced bipartite graph with classes $V_0$ and $V_1$.
\end{claim}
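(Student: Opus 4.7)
The plan is to combine two ingredients: the extremality lower bound $e(G) \geqs e(T_{n,2})+(k-1)^2$ (which comes from Lemma \ref{LEMMA:Gnk is Ckq-free} via the assumption that $G$ is extremal), and the structural bound $m = e(V_0)+e(V_1) < \gm n^2$ (which comes from Lemma \ref{LEMMA:partition} applied to $G$ with the specified $\gm$). Both the balance of the partition and the claimed lower bound on $m$ will fall out of the same short chain of inequalities.

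For the balance statement, I would write $t = |V_0| - n/2$, so $|V_1| = n/2 - t$ and $|V_0|\cdot|V_1| = n^2/4 - t^2$. Then the trivial upper bound $e(V_0,V_1) \leqs |V_0|\cdot|V_1|$ combined with $e(G) = m + e(V_0,V_1)$ and the lower bound on $e(G)$ gives
\[
\tfrac{n^2}{4} - t^2 \;\geqs\; e(V_0,V_1) \;\geqs\; e(G) - m \;\geqs\; e(T_{n,2}) + (k-1)^2 - m.
\]
Using $e(T_{n,2}) \geqs (n^2-1)/4$ and $m < \gm n^2$, this rearranges to $t^2 < \gm n^2 + \tfrac14 - (k-1)^2$. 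Since $k \geqs 2$, the constant term is nonpositive, so $t^2 < \gm n^2$ and hence $|t| < \sgmn$, which is exactly the desired two-sided bound on $|V_i|$.

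For the second part, I would use that for $|V_0|+|V_1|=n$ the product $|V_0|\cdot|V_1|$ is maximized exactly when $||V_0|-|V_1|| \leqs 1$, with maximum value $\lf n/2\rf \lc n/2\rc = e(T_{n,2})$. So $e(G) \leqs m + |V_0|\cdot|V_1| \leqs m + e(T_{n,2})$, and comparison with the lower bound $e(G) \geqs e(T_{n,2}) + (k-1)^2$ immediately yields $m \geqs (k-1)^2$. If equality $m = (k-1)^2$ holds, then equality must hold in both steps above: $e(V_0,V_1) = |V_0|\cdot|V_1|$ (so every potential cross-edge is realized) and $|V_0|\cdot|V_1| = e(T_{n,2})$ (so $|V_0|$ and $|V_1|$ differ by at most one). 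Together these say that $G$ contains as a spanning bipartite subgraph the complete balanced bipartite graph on $V_0 \cup V_1$, which is exactly the claim.

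There is no serious obstacle here; the whole argument is a few lines of arithmetic. The only subtle point worth flagging is ensuring the bound $|t| < \sgmn$ is strict rather than non-strict: this is where the slack $(k-1)^2 \geqs 1$ (from $k\geqs 2$) absorbs the $\tfrac14$ loss coming from $e(T_{n,2}) = \lf n/2\rf\lc n/2\rc$ when $n$ is odd.
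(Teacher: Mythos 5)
Your argument is correct and follows essentially the same route as the paper: parametrize the imbalance of the partition, bound $e(G)$ by $m+|V_0||V_1|$, and compare with the extremality lower bound $e(G)\geqs e(T_{n,2})+(k-1)^2$ to extract both the balance estimate and $m\geqs(k-1)^2$, with the equality case forcing $e(V_0,V_1)=e(T_{n,2})$. Your explicit handling of the integrality slack (using $(k-1)^2\geqs 1$ to absorb the $\tfrac14$ from $\lf n^2/4\rf$) is a slightly more careful rendering of a step the paper passes over quickly, but it is the same proof.
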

\begin{proof}
Let $|V_0|=\frac{n}{2}+a$. Then $|V_1|=\frac{n}{2}-a$. Since
$$\lf\frac{n^2}{4}\rf+(k-1)^2=e(T_{n,2})+(k-1)^2\leqs e(G)\leqs|V_0||V_1|+m=\frac{n^2}{4}-a^2+m,$$
we have
$m\geqs(k-1)^2$ and $m\geqs a^2$. Since $m<\gmnn$, $a^2<\gmnn$. Hence $|a|<\sgmn$.

If $m=(k-1)^2$, then $$e(T_{n,2})+(k-1)^2\leqs e(G)=e(V_0,V_1)+(k-1)^2.$$
 Hence $e(V_0,V_1)=e(T_{n,2})$, that is $V_0, V_1$ are balanced and so $G$ contains a complete balanced bipartite graph with classes $V_0$ and $V_1$.
\end{proof}
In the following, let $G_i=G[V_i]$, $\Dta_i=\Dta(G_i)$ and $\nu_i=\nu(G_i)$, $i=0,1$ for short. For a vertex $x\in V_i$,  let $E_{1-i}(x)=\{e\in E(G_{1-i}) |\ V(e)\cap N_{G}(x)\not=\emptyset\}$.

\begin{claim}\label{Claim: C_k,q} For any vertex $x\in V_i$,  $$\deg_{G_i}(x)+\nu(G_i-N_{G_i}[x])+\nu(G[E_{1-i}(x)])\leqs k-1.$$
\end{claim}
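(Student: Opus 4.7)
The plan is to argue by contradiction. Assume $\deg_{G_i}(x)+\nu(G_i-N_{G_i}[x])+\nu(G[E_{1-i}(x)])\geqs k$ for some $x\in V_i$; I will build $k$ copies of $C_q$ through $x$ that pairwise meet only at $x$, producing a $C_{k,q}\subseteq G$ and contradicting the $C_{k,q}$-freeness of $G$. Collect three families of \emph{seeds}: (a) the edges $xy$ with $y\in V_i\cap N_G(x)$; (b) the edges of a maximum matching $M_1\subseteq G_i-N_{G_i}[x]$; (c) the edges of a maximum matching $M_2\subseteq G[E_{1-i}(x)]$, each equipped with a distinguished endpoint $u'\in N_G(x)\cap V_{1-i}$. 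These seeds are pairwise vertex-disjoint away from $x$: type (a) seeds share only $x$; type (b) seeds lie in $V_i\setminus N_{G_i}[x]$, disjoint from the $V_i$-endpoints $N_{G_i}(x)$ used by type (a); and type (c) seeds lie entirely in $V_{1-i}$. The hypothesis supplies the $k$ required seeds.

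For each seed I extend it to an odd $q$-cycle through $x$ by inserting short cross-paths (edges across the bipartition $V_0,V_1$): for $xy$ of type (a), a cross-path of length $q-1$ from $y$ back to $x$; for $uv$ of type (b), two cross-paths of lengths $2a,2b$ with $a+b=(q-1)/2$, joining $x$ to $u$ and $v$ to $x$; for $u'v'$ of type (c), the head $x$-$u'$-$v'$ followed by a cross-path of length $q-2$ from $v'$ back to $x$. Building the cycles one by one, at each greedy step the forbidden set is the union of (i) the ``bad'' vertices $B$ (those with at least $\sgmn$ same-side neighbors; $|B|\leqs 2m/\sgmn\leqs 2\sgmn$ since $m<\gmnn$), (ii) the at most $2k$ remaining seed anchors, and (iii) the at most $k(q-2)$ internal vertices of previously built cycles. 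This total size is $O(\sgmn+kq)\ll n/2$ by the choice of $n_1$. Since $\delta(G)\geqs\lf n/2\rf$ and $|V_j|\leqs n/2+\sgmn$ by Claim~\ref{LEMMA:V0V1 are closely balanced}, every non-bad vertex has cross-degree at least $n/2-\sgmn$, so each intermediate hop of the cross-path has plenty of admissible continuations; the closing step finds a common cross-neighbor of $x$ and the current endpoint by inclusion-exclusion in $V_{1-i}$.

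The principal technical obstacle is the case of a ``bad'' seed endpoint---for instance $u\in V_i$ in a type-(b) seed or $v'\in V_{1-i}$ in a type-(c) seed---that itself has too few cross-neighbors into the safe pool to launch the short cross-detour prescribed above. For $q\geqs 7$ the slack $a+b=(q-1)/2\geqs 3$ permits lengthening the detour through an extra pair of same-side edges at the bad endpoint, keeping the total length $q$ and the number of cross-edges even so that the cycle remains odd; for $q=5$, where $a=b=1$ is forced, one instead argues that $M_1$ and $M_2$ may be chosen to avoid bad endpoints altogether---using $|B|=O(\sgmn)$ and the abundance of cross-edges to exchange any bad-endpoint matching edge for a good one without decreasing the matching size. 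Once every seed is routed, the $k$ resulting odd $q$-cycles meet only at $x$, yielding a $C_{k,q}\subseteq G$, the desired contradiction.
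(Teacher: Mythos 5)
Your overall skeleton is the same as the paper's: assume the sum is at least $k$, collect the three types of seeds (edges at $x$ inside $V_i$, a matching in $G_i-N_{G_i}[x]$, a matching in $G[E_{1-i}(x)]$ anchored at cross-neighbors of $x$), and greedily route each seed into a $q$-cycle through $x$ using cross-paths that dodge bad vertices and previously used vertices. But there is a genuine gap in how you handle seed endpoints whose cross-degree you cannot lower-bound by roughly $n/2$, and the patches you propose do not close it. First, you never invoke the max-cut property of the partition: since $e(V_0,V_1)$ is maximal and $\delta(G)\ge\lf n/2\rf$, \emph{every} vertex $u\in V_j$ satisfies $e_G(u,V_{1-j})\ge\max\{e_G(u,V_j),\,\lf n/2\rf-e_G(u,V_j)\}\ge\frac12\lf n/2\rf$. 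Without this, even your type-(a) closing step (a common cross-neighbor of a good penultimate vertex and $x$) is unjustified when $x$ is bad. With it, "launching" a detour from a bad endpoint is never the issue; the only real difficulty is a \emph{closing} edge both of whose prescribed endpoints are merely $\frac12\lf n/2\rf$-vertices, as happens for a type-(b) seed with a length-$2$ cross-path ($a=1$ or $b=1$), which is forced not only for $q=5$ but also for $q=7$ since $a+b=3$. Your two fixes fail: the "extra pair of same-side edges at the bad endpoint" needs the bad endpoint's same-side neighbor to itself have a further same-side neighbor, which need not exist (and the parity bookkeeping is unverified); and the matching-exchange for $q=5$ is impossible when every maximum matching of $G_i-N_{G_i}[x]$ is forced through a bad vertex, e.g.\ when $G_i-N_{G_i}[x]$ is a star centered at a bad vertex.

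The paper resolves exactly this difficulty with a two-stage bootstrap that your single-pass construction misses. Stage one runs the argument using \emph{only} type-(a) seeds at an arbitrary vertex $x$: each such cycle closes by pairing the seed endpoint $x_\ell$ (cross-degree $\ge\frac12\lf n/2\rf$ by max-cut) with a freely chosen \emph{good} vertex (cross-degree $\ge\lf n/2\rf-t_1$), so the common-neighbor count is about $n/4$ and no goodness assumption on $x_\ell$ is needed. This yields $\deg_{G_i}(x)\le k-1$ for every $x$, hence $\Delta(G_i)\le k-1<t_1$, hence there are \emph{no} bad vertices at all and every vertex has cross-degree at least $n/2-k$. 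Only then, in stage two, are the type-(b) and type-(c) cycles built, and every closing step now involves at least one vertex of cross-degree $n/2-k$, so even the short detours of the $q=5$ case go through. You should restructure your argument around this bootstrap (or find a genuinely different way to rule out low-cross-degree seed endpoints); as written, the type-(b) seeds for $q\in\{5,7\}$ cannot be routed.
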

\begin{proof}We prove it by contradiction. Wlog, assume that there is an $x\in V_0$ such that $\deg_{G_0}(x)+\nu(G_0-N_0[x])+\nu(G[E_{1}(x)])\ge k$. Let $xx_1, xx_2,\ldots, xx_s\in E(G_0)$, and let $M_0=\{u_{s+1}v_{s+1}, \ldots, u_{t}v_{t}\}$ be a matching of $G[V_0\setminus\{x,x_1,\ldots, x_s\}]$ and $M_{1}=\{u_{t+1}v_{t+1}, \ldots, u_{k}v_{k}\}$ a matching of $G[E_{1}(x)]$ such that $xu_{t+1}, \ldots, xu_{k}\in E_G(x, V_{1})$, where $s,t\in [0,k]$. Define
	$$
	\xi(j)=\left\{
	\begin{aligned}
	&1 & &\mbox{if~$j$~is~odd,}\\
	&0 & &\mbox{if~$j$~is~even.}
	\end{aligned} \right.
	$$
	We say that $v$ is {\it bad} if $\deg_{G[V_i]}(v)>t_1$, otherwise $v$ is said to be {\it good}, where  $t_1=6\sgmn$. Then the number of bad vertices in $G$ is at most $\frac{2m}{t_1}<\frac{2\gmnn}{t_1}=\bad.$
	
For any vertex $u\in V_i$ ($i=0,1$), by the maximality of $e(V_0,V_1)$, we have
	 $$e_G(u,V_{1-i})\geqs \max\{e_G(u,V_i),\dtaT-e_G(u,V_i)\}\geqs\frac12\dtaT.$$
Particularly, if $u\in {V_i}$  is good, then we have
	 $$e_G(u,V_{1-i})\geqs\dtaT-e_G(u,V_i)\geqs\dtaT-t_1~(\geqs\frac12\dtaT).$$
	
	Let $A=V(\{xx_1, xx_2,\ldots, xx_s\}\cup M_i\cup M_{1-i})$. We find a copy of $\ckq$ passing through all the vertices of $A$ to get a contradiction.
	
	For each $\ell\in [1,s]$, we find a sequence of vertices $w_{1\ell}^1, w_{0\ell}^2,\cdots, w_{0\ell}^{q-3}, w_{1\ell}^{q-2}$ with $w_{\xi(j)\ell}^j\in{V_{\xi(j)}\setminus A}$ for $1\leqs j\leqs q-2$, such that $w_{0\ell}^{q-3}$ is good and $xw_{1\ell}^1\cdots w_{0\ell}^{q-3}w_{1\ell}^{q-2}x_{\ell}x$ is a $q$-cycle. Furthermore,  we require that $w_{\xi(j)\ell}^j$ ($\ell\in[1,s]$, $j\in [1,q-2]$) are pairwise different. This is possible since together with all vertices in $A$, the total number of good vertices which we have found is at most $|V(\ckq)|=k(q-1)+1$ and each vertex $u\in V_i$ has at least
	$$e_G(u, V_{1-i})-\bad\geqs\frac12\dtaT-\bad\geqs|V(\ckq)|~(\mbox{since}~n\geqs 20k^2q)$$
	good neighbors in $V_{1-i}$
	and the number of common neighbors of $w_{0\ell}^{q-3}$ and $x_{\ell}$ in $V_{1}$ is at least $(\mbox{since}~n\geqs 20k^2q)$
	$$ e_{G}(w_{0\ell}^{q-3},V_{1})+e_{G}(x_{\ell},V_{1})-|V_{1}|\geqs\dtaT-t_1+\frac12\dtaT-(\frac{n}{2}+\sgmn)\geqs|V(\ckq)|.$$
	Thus we have found a copy of $C_{s,q}$ centered at $x$ and passing through the edges of $\{xx_1, xx_2,\cdots, xx_s\}$. 	Particularly, since $G$ is $\ckq-$free, we have $s\leqs k-1$. Thus $$\Delta_i=\Delta(G_i)\leqs k-1<t_1,\  i=0,1.$$ Consequently, all the vertices of $G$ are good, and for each vertex $u\in V_i$, $$e_G(u, V_{1-i})\geqs\dtaT-(k-1)\geqs\frac{n}{2}-k$$ and thereby $$\frac{n}{2}-k\leqs|V_i|\leqs\frac{n}{2}+k, \ i=0,1.$$
	
	Next we will find a copy of $C_{k-s, q}$ centered at $x$ disjoint from the copy of $C_{s,q}$. For every $u_{\ell}$ ($\ell\in [s+1,t]$), choose a common neighbor of $x$ and $u_l$, say $w_{1\ell}^{1}$, in $V_1$ such that $w_{1\ell}^1\neq w_{1\ell'}^{1}$ if $\ell\neq \ell'$. We can do this because the number of common neighbors of $x$ and $u_\ell$ in $V_1$ is at least
	$$e_G(x, V_1)+e_G(u_\ell, V_1)-|V_1|\geqs 2(\frac n2-k)-(\frac n2+k)\geqs|V(\ckq)|\ (\mbox{since}~n\geqs 20k^2q).$$
	For each $\ell\in[s+1,t]$, with the same reason as above,  we find vertices $w_{1\ell}^3,w_{0\ell}^4,\cdots,w_{0\ell}^{q-3}$ one by one, then a common neighbor of $w_{0\ell}^{q-3}$ and $v_\ell$, say $w_{1\ell}^{q-2}$,  in $V_1$ such that $u_\ell w_{1\ell}^1xw_{1\ell}^3\cdots w_{1\ell}^{q-2}v_\ell u_\ell$ is a $q$-cycle. And for every $l\in [t,k]$, begin with $x$, we  can find vertices $w_{1\ell}^2,w_{0\ell}^3,\cdots,w_{1\ell}^{q-3}$ one by one, then a common neighbor, say $w_{0\ell}^{q-2}$, of $w_{1\ell}^{q-3}$ and $v_\ell$ in $V_0$ such that $u_\ell xw_{1\ell}^2w_{0\ell}^3\cdots w_{0\ell}^{q-2}v_\ell u_\ell$ is a $q$-cycle.
	Also, we may require that $w_{\xi(j)\ell}^j\ (\ell\in[s+1, k], j\in[1,q-2])$ are pairwise different. Thus the $k-s$ $q$-cycles form a copy of $C_{k-s,q}$ centered at $x$ and passing through all of the edges of $M_0\cup M_1$.  Therefore, $G[\{x,x_1,\cdots,x_l\}\cup\{w_{\xi(j),l}^j \ : \ j\in[1,q-2], \ \ell\in [1,k]\}]$ contains a desired copy of $\ckq$ with center $x$. This completes the proof.
\end{proof}

\begin{claim}\label{CLAIM:Nu0+Nu1<k} $\nu_0+\nu_1\leqs{k-1}$.
\end{claim}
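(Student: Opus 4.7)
The plan is to derive $\nu_0+\nu_1\leqs k-1$ by contradiction, reducing everything to Claim~\ref{Claim: C_k,q} applied at a single carefully chosen vertex. Assume $\nu_0+\nu_1\geqs k$ for contradiction, and fix maximum matchings $M_0\subseteq E(G_0)$ and $M_1\subseteq E(G_1)$. I would look for $x\in V_0$ whose neighborhood simultaneously avoids $V(M_0)$ inside $G_0$ and contains $V(M_1)$ inside $G$: property~(a), $N_{G_0}[x]\cap V(M_0)=\emptyset$, ensures $M_0$ survives in $G_0-N_{G_0}[x]$ so $\nu(G_0-N_{G_0}[x])\geqs\nu_0$, and property~(b), $V(M_1)\subseteq N_G(x)$, ensures each edge of $M_1$ lies in $E_1(x)$ so $\nu(G[E_1(x)])\geqs\nu_1$. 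Claim~\ref{Claim: C_k,q} then yields
$$k-1\geqs\deg_{G_0}(x)+\nu(G_0-N_{G_0}[x])+\nu(G[E_1(x)])\geqs\nu_0+\nu_1\geqs k,$$
a contradiction (the case where we instead choose $x\in V_1$ is symmetric, and if $\nu_1=0$ condition~(b) is vacuous so only~(a) is needed).

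The existence of such an $x$ I would establish by counting the vertices of $V_0$ violating (a) or (b) and verifying the complement is nonempty. Here I would lean on the stronger estimates already extracted in the proof of Claim~\ref{Claim: C_k,q}, namely $\Delta_i\leqs k-1$ and $\frac{n}{2}-k\leqs|V_i|\leqs\frac{n}{2}+k$ for $i=0,1$. Vertices failing (a) lie in $V(M_0)\cup N_{G_0}(V(M_0))$, of size at most $2(k-1)+2(k-1)\Delta_0\leqs 2(k-1)+2(k-1)^2$. For each $v\in V(M_1)\subseteq V_1$, combining $\deg_G(v)\geqs\lf\frac{n}{2}\rf$ with $\deg_{G_1}(v)\leqs k-1$ gives $|V_0\setminus N_G(v)|\leqs|V_0|-\lf\frac{n}{2}\rf+(k-1)\leqs 2k-1$, so the vertices failing (b) number at most $2(k-1)(2k-1)$. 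The total exclusion is $O(k^2)$, whereas $|V_0|\geqs\lf\frac{n}{2}\rf-k$, so for $n\geqs n_1(k,q)$ a valid $x$ is guaranteed.

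The main obstacle I anticipate is precisely this counting step: the whole argument is driven by the sharper bounds $|V_i|\leqs\frac{n}{2}+k$ and $\Delta_i\leqs k-1$, not by the cruder $|V_i|<\frac{n}{2}+\sgmn$ coming from Claim~\ref{LEMMA:V0V1 are closely balanced} alone. With only the latter, each $v\in V(M_1)$ could have up to $\Theta(\sgmn)$ non-neighbors in $V_0$ and the exclusion count would dwarf $|V_0|$, so the one-vertex ``catch-all'' strategy would collapse. Fortunately these sharper bounds are produced as a by-product of the proof of Claim~\ref{Claim: C_k,q}, so I can invoke them directly and close the argument.
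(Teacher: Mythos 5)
Your proof is correct and follows essentially the same route as the paper: both arguments reduce the claim to an application of Claim~\ref{Claim: C_k,q} at a single vertex $x\in V_0$ found by a union-bound count that relies on the sharper estimates $\Delta_i\le k-1$ and $\frac{n}{2}-k\le|V_i|\le\frac{n}{2}+k$ extracted in the proof of that claim. The only difference is minor: the paper takes $x$ to be merely a common neighbor of one endpoint of each $M_1$-edge lying outside $V(M_0)$ and compensates for any $M_0$-edges destroyed by $N_{G_0}[x]$ via the observation $\deg_{G_0}(x)+|M_0'|\ge|M_0|$, whereas you impose the stronger (but, as your count shows, still satisfiable) requirement that $N_{G_0}[x]$ avoid $V(M_0)$ entirely.
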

\begin{proof} If not, suppose that $M_0=\{u_1v_1,\cdots,u_sv_s\}$ and $M_1=\{u_{s+1}v_{s+1},\cdots,u_kv_k\}$ are matchings in $G_0$ and $G_1$, respectively. Wlog, assume that $s\geqs1$.
First we find a common neighbor, say $x$, of $u_{s+1},\cdots,u_k$ in $V_0\setminus V(M_0)$. This is possible since the number of such neighbors is at least
\begin{eqnarray*}
&&\sum_{l=s+1}^k e_G(u_\ell, V_0)-(k-s-1)|V_0|-2s\\
&\geqs&(k-s)(\frac n2-k)-(k-s-1)(\frac{n}{2}+k)-2k\\
&=&\frac n2-(2(k-s)+1)k\\
&\geqs& \frac n2-2k^2>0 ~(\mbox{since}~n\ge 20k^2q).
\end{eqnarray*}
Let $M_0'$ be the maximal subset of $M_0$ such that $M_0'$ is a matching of $G_0-N_{G_0}[x]$.
By Claim \ref{Claim: C_k,q}, $\deg_{G_0}(x)+|M_0'|+|M_{1}|\le k-1$. Clearly, $\deg_{G_0}(x)+|M_0'|\ge s$. Hence $\deg_{G_0}(x)+|M_0'|+|M_{1}|\ge k$, a contradiction.
\end{proof}

\begin{claim}\label{CLAIM:Dta and Nu attaining max}
$\max\{\Dta_0,\Dta_1\}=k-1$.
\end{claim}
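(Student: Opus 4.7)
The upper bound $\max\{\Delta_0,\Delta_1\}\leq k-1$ is immediate from Claim~\ref{Claim: C_k,q}: dropping the two nonnegative terms there yields $\deg_{G_i}(x)\leq k-1$. So the work lies in ruling out the possibility $\Delta_0,\Delta_1\leq k-2$. I would argue by contradiction, and the proposal is to play the equality cases of the ambient bounds against each other to force an embedded $K_{k-1,k-1}$ whose maximum degree contradicts the assumption.

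Assume $\Delta_0,\Delta_1\leq k-2$. First, Lemma~\ref{LEMMA:f_Nu_Delta} gives $e(G_i)\leq \nu_i(\Delta_i+1)\leq \nu_i(k-1)$, and Claim~\ref{CLAIM:Nu0+Nu1<k} gives $\nu_0+\nu_1\leq k-1$, so
$$m=e(G_0)+e(G_1)\leq (\nu_0+\nu_1)(k-1)\leq (k-1)^2.$$
Combined with $m\geq (k-1)^2$ from Claim~\ref{LEMMA:V0V1 are closely balanced}, every inequality above is an equality; in particular $\nu_0+\nu_1=k-1$, and the equality clause of Claim~\ref{LEMMA:V0V1 are closely balanced} forces $V_0,V_1$ to be balanced with $G$ containing a complete bipartite graph between $V_0$ and $V_1$.

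This complete bipartite structure means that for every $x\in V_i$ one has $V_{1-i}\subseteq N_G(x)$, hence $E_{1-i}(x)=E(G_{1-i})$ and $\nu(G[E_{1-i}(x)])=\nu_{1-i}$. Substituting into Claim~\ref{Claim: C_k,q} tightens the bound to $\deg_{G_i}(x)+\nu(G_i-N_{G_i}[x])\leq k-1-\nu_{1-i}=\nu_i$ for every $x\in V_i$. Applying Lemma~\ref{LEMMA:MAIN LEMMA} with $r=\nu_i$ to the subgraph of $G_i$ induced by its non-isolated vertices then yields $e(G_i)\leq \nu_i^2$, with equality if and only if that subgraph equals $K_{\nu_i,\nu_i}$.

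Summing over $i$, $m\leq \nu_0^2+\nu_1^2\leq (\nu_0+\nu_1)^2=(k-1)^2$, and the middle inequality is strict unless $\nu_0\nu_1=0$. Since $m=(k-1)^2$, we may assume without loss of generality that $\nu_0=k-1$ and $\nu_1=0$; then $e(G_0)=(k-1)^2=\nu_0^2$ and the equality case of Lemma~\ref{LEMMA:MAIN LEMMA} forces the non-isolated part of $G_0$ to be $K_{k-1,k-1}$. But this graph has maximum degree $k-1$, so $\Delta_0\geq k-1$, contradicting $\Delta_0\leq k-2$. The main obstacle is not any single estimate but threading the equality cases of Claim~\ref{LEMMA:V0V1 are closely balanced}, Lemma~\ref{LEMMA:f_Nu_Delta}, and Lemma~\ref{LEMMA:MAIN LEMMA} together cleanly, so that the balanced bipartite structure combines with Claim~\ref{Claim: C_k,q} to produce the embedded $K_{k-1,k-1}$ that supplies the required contradiction.
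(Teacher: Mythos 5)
Your proof is correct, but it closes the argument by a genuinely different route than the paper. Both proofs open the same way: assume $\Dta_0,\Dta_1\leqs k-2$ and bound $m$ via Lemma~\ref{LEMMA:f_Nu_Delta} and Claim~\ref{CLAIM:Nu0+Nu1<k}. The paper, however, uses the exact Chav\'atal--Hanson value $f(\nu,\Dta)$, which gives $m\leqs f(k-1,k-2)<(k-1)^2$ outright for every $k\neq 4$, and only in the single case $k=4$ (where $f(3,2)=9=(k-1)^2$) does it resort to an equality analysis, identifying the extremal configuration as three disjoint triangles and contradicting Claim~\ref{Claim: C_k,q} directly. You instead use only the crude bound $\nu(\Dta+1)$, which never yields a strict numerical contradiction, and compensate with a uniform equality-threading argument: $m=(k-1)^2$ triggers the equality clause of Claim~\ref{LEMMA:V0V1 are closely balanced}, the resulting complete bipartite structure makes $\nu(G[E_{1-i}(x)])=\nu_{1-i}$ and upgrades Claim~\ref{Claim: C_k,q} to the hypothesis of Lemma~\ref{LEMMA:MAIN LEMMA} with $r=\nu_i$, and then $m\leqs\nu_0^2+\nu_1^2\leqs(k-1)^2$ forces one side to be empty and the other to induce $K_{k-1,k-1}$, whose maximum degree $k-1$ is the contradiction. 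Your version buys uniformity in $k$ (no special case for $k=4$) and avoids the precise formula for $f$, at the cost of invoking the equality cases of Claim~\ref{LEMMA:V0V1 are closely balanced} and Lemma~\ref{LEMMA:MAIN LEMMA} earlier than the paper does; in effect you anticipate the endgame of the paper's main proof inside this claim. I checked the delicate points --- that $E_{1-i}(x)=E(G_{1-i})$ under the complete bipartite structure, that passing to the non-isolated part of $G_i$ preserves the hypothesis of Lemma~\ref{LEMMA:MAIN LEMMA}, and that $\nu_0^2+\nu_1^2=(\nu_0+\nu_1)^2$ forces $\nu_0\nu_1=0$ --- and they all hold.
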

\begin{proof} If not, then by Claim \ref{Claim: C_k,q}, $\max\{\Dta_0,\Dta_1\}\leqs k-2$. Thus by Lemma \ref{LEMMA:f_Nu_Delta} and Claim \ref{CLAIM:Nu0+Nu1<k},
\begin{eqnarray*}
m&=&e(V_0)+e(V_1)\leqs f(\nu_0,k-2)+f(\nu_1,k-2)\\
&\leqs&f(\nu_0+\nu_1,k-2)\leqs f(k-1,k-2).
\end{eqnarray*}

If $k\neq4$, $m\leqs f(k-1,k-2)=(k-1)^2-1$, contradicts to $m\ge (k-1)^2$ (by Claim~\ref{LEMMA:V0V1 are closely balanced}).

If $k=4$, then $m\leqs f(3,2)=(k-1)^2=9$. By Claim~\ref{LEMMA:V0V1 are closely balanced}, $m=(k-1)^2=9$ and $G$ contains a complete balanced bipartite subgraph with classes $V_0$ and $V_1$. Let $H$ be the subgraph consisting of nonempty components of $G_0\cup G_1$.  Then $H$ is a graph with $e(H)=9$, $\Delta(H)=2$ and $\nu(H)=3$. By Observation~\ref{OBS: o1} and with a similar discussion as in  Case 1 of the proof of Lemma~\ref{LEMMA:MAIN LEMMA}, $H$  consists of three vertex-disjoint triangles. Then we can easily find a vertex $x$ in $H$ with $\deg_H(x)=2$ and a matching of order 2 in $H-N_H[x]$. That is $\deg_{G_i}(x)+\nu(G_i-N_{G_i}[x])+\nu(G[E_{1-i}(x)])\ge 4=k$, a contradiction with Claim \ref{Claim: C_k,q}.
\end{proof}

\begin{claim}\label{CLAIM:e0e1=0} $e(V_0)\cdot e(V_1)=0$.
\end{claim}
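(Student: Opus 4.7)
The plan is to derive a contradiction from assuming both $e(V_0)\geqs1$ and $e(V_1)\geqs1$. By Claims~\ref{CLAIM:Nu0+Nu1<k} and \ref{CLAIM:Dta and Nu attaining max}, one has $\nu_0,\nu_1\geqs1$ with $\nu_0+\nu_1\leqs k-1$ and, up to swapping $V_0,V_1$, $\Delta_0=k-1$. Fix $x_0\in V_0$ with $\deg_{G_0}(x_0)=k-1$. Claim~\ref{Claim: C_k,q} applied at $x_0$ gives $\nu(G_0-N_{G_0}[x_0])=0$ and $\nu(G[E_1(x_0)])=0$; the second forces every edge of $G_1$ to lie inside $W:=V_1\setminus N_G(x_0)$, so every non-isolated vertex of $G_1$ is a non-neighbor of $x_0$ in $G$.

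I then split into two cases according to $\Delta_1$. In the symmetric case $\Delta_1=k-1$, choose $y\in V_1$ with $\deg_{G_1}(y)=k-1$ and apply Claim~\ref{Claim: C_k,q} at $y$ to obtain $\nu(G[E_0(y)])=0$. The $k$ vertices $\{x_0\}\cup N_{G_0}(x_0)$, which support the $k-1$ $G_0$-edges incident to $x_0$, are then all non-neighbors of $y$ in $V_0$; combined with $\deg_G(y,V_0)\geqs\lf n/2\rf-(k-1)$ this forces $|V_0|\geqs\lf n/2\rf+1$, and symmetrically $|V_1|\geqs\lf n/2\rf+1$, contradicting $|V_0|+|V_1|=n$.

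In the remaining case $\Delta_1\leqs k-2$ (which for $k=2$ immediately forces $e(V_1)=0$, contradicting the assumption) I count missing cross edges. Let $T$ be the set of non-isolated vertices of $G_1$: by the observation above, every $v\in T$ satisfies $x_0v\notin E(G)$, so $G$ has at least $|T|$ missing cross edges. Conversely, $e(G)\geqs e(T_{n,2})+(k-1)^2$ together with $|V_0||V_1|\leqs\lf n^2/4\rf=e(T_{n,2})$ bounds the total number of missing cross edges by $m-(k-1)^2$, where $m=e(V_0)+e(V_1)$, so $|T|\leqs m-(k-1)^2$. A handshake in $G_1$ provides the lower bound $|T|\geqs\max\{2\nu_1,\Delta_1+1\}$ (combining the matching with the star at a maximum-degree vertex), while Lemma~\ref{LEMMA:f_Nu_Delta} together with $\nu_0\leqs k-1-\nu_1$ controls $m$ through $e(V_0)\leqs f(k-1-\nu_1,k-1)$ and $e(V_1)\leqs f(\nu_1,k-2)$. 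The main obstacle is showing that these three inequalities are mutually incompatible for every admissible $\nu_1\in\{1,\ldots,k-2\}$; this reduces to a finite numerical check done by splitting on the parity of $k$ to evaluate $f$, in which the sharper bound $|T|\geqs\Delta_1+1$ (rather than only $|T|\geqs2\nu_1$) is what closes the boundary cases where $G_1$ is a large star.
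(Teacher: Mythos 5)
Your setup reproduces the paper's key observation (from Claim~\ref{Claim: C_k,q} at a vertex $x_0$ of $G_0$-degree $k-1$, every non-isolated vertex of $G_1$ is a non-neighbour of $x_0$, hence $|T|\leqs m-(k-1)^2$ missing cross edges), and your first case $\Delta_1=k-1$ is handled correctly and rather cleanly by the two-sided degree count forcing $|V_0|,|V_1|\geqs\lf n/2\rf+1$. The problem is the case $\Delta_1\leqs k-2$, which is the heart of the claim: you assert that the three inequalities $|T|\geqs\max\{2\nu_1,\Delta_1+1\}$, $|T|\leqs m-(k-1)^2$ and $m\leqs f(k-1-\nu_1,k-1)+f(\nu_1,k-2)$ are mutually incompatible after a finite check, but as stated they are \emph{not}. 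Take $k=7$, $\nu_1=1$, $\Delta_1=1$: then $f(5,6)+f(1,5)=33+5=38$, so your bounds allow $m=38$, $|T|\leqs 38-36=2$, while $\max\{2\nu_1,\Delta_1+1\}=2$; all three inequalities hold with $|T|=2$, so no contradiction is produced. The loss occurs because you bound $e(V_1)$ by $f(\nu_1,k-2)$, which ignores the actual maximum degree of $G_1$; the contradiction in such configurations comes precisely from $e(V_1)\leqs f(\nu_1,\Delta_1)\leqs\nu_1(\Delta_1+1)\leqs\nu_1|T|$, i.e.\ from tying $e(V_1)$ to $|T|$ itself. With that replacement the system does close for all admissible $(\nu_1,\Delta_1)$, which is exactly how the paper argues: it writes $m\leqs k\nu_0+(k-1-\nu_0)|A_1|$ using $\Delta_1+1\leqs|A_1|$ (your $|T|$) and $|A_1|\leqs m-(k-1)^2\leqs k-1$, and the algebra yields $m\leqs m-1$ uniformly, with no case split on $\Delta_1$ needed. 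So the architecture of your argument is essentially the paper's, but the decisive numerical step is both unverified and, in the form you set it up, false; you must strengthen the bound on $e(V_1)$ before the ``finite check'' can succeed.
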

\begin{proof} At first, by Claim \ref{CLAIM:Dta and Nu attaining max} and Claim~\ref{CLAIM:Nu0+Nu1<k}, we have
	\begin{eqnarray*}
		m&\leqs& f(\nu_0,k-1)+f(\nu_1,k-1)\leqs f(\nu_0+\nu_1,k-1)\\
		&\leqs& f(k-1,k-1)\leqs k(k-1).
	\end{eqnarray*}
	
	Next, by Claim \ref{CLAIM:Dta and Nu attaining max}, $\max\{\Dta_0,\Dta_1\}=k-1$. Wlog, assume $\Dta_0=k-1$. Let $x\in V_0$ with $\deg_{G_0}(x)=k-1$. We show that $e(V_1)=0$.
If $e(V_1)>0$, then $\nu_1\geqs1$. By Claim \ref{CLAIM:Nu0+Nu1<k}, $\nu_0\leqs k-1-\nu_1=k-2$. Let $A_1=\{u\in{V_1}:\deg_{G_1}(u)>0\}$.
By Claim \ref{Claim: C_k,q}, we have $A_1\cap N_G(x)=\emptyset$.
So $e(V_0,V_1)\leqs|V_0||V_1|-|A_1|\leqs e(T_{n,2})-|A_1|$. Thus we have  $$e(T_{n,2})+(k-1)^2\leqs e(G)\leqs e(T_{n,2})-|A_1|+m.$$
Therefore,  $|A_1|\leqs m-(k-1)^2$( $\leqs k-1$).  Again by Lemma~\ref{LEMMA:f_Nu_Delta},  we have
\begin{eqnarray*}
m
 &\leqs &\nu_0(\Dta_0+1)+\nu_1(\Dta_1+1)\\
 &\leqs& k\nu_0+(k-1-\nu_0)|A_1|\, \mbox{ (since $\Dta_1+1\leqs|A_1|$)}\\
&=&\nu_0(k-|A_1|)+(k-1)|A_1|\\
&\leqs&(k-2)(k-|A_1|)+(k-1)|A_1|\, \mbox{ (since $|A_1|\le k-1$ and $\nu_0\le k-2$)}\\
& =&(k-1)^2+|A_1|-1\leqs(k-1)^2+m-(k-1)^2-1\\
&= & m-1, \mbox{ a contradiction.}
\end{eqnarray*}
\end{proof}

By Claim \ref{CLAIM:e0e1=0}, wlog, we may assume $e(V_1)=0$. So $m=e(V_0)$. Let $A_0$ be the set of non-isolated vertices in $G_0$. By Claim \ref{Claim: C_k,q} and Lemma \ref{LEMMA:MAIN LEMMA}, $m=e(G[A_0])\leqs(k-1)^2$. By Claim~\ref{LEMMA:V0V1 are closely balanced}, we must have $m=(k-1)^2$ and therefore $G$ contains a complete balanced bipartite subgraph with classes $V_0$ and $V_1$. Again by Claim \ref{Claim: C_k,q} and Lemma \ref{LEMMA:MAIN LEMMA}, since $e(G[A_0])=(k-1)^2$, $G[A_0]$ must be a copy of $K_{k-1,k-1}$. This completes the proof of Theorem~\ref{THM:MAIN THEOREM}.
\qed

\end{document}